\definecolor{MyLinkColor}{rgb}{0,0,0.4}
\newtheorem{thm}{Theorem}[section]
\newtheorem{prop}[thm]{Proposition}
\theoremstyle{remark} 
\newtheorem{rem}[thm]{Remark}
\numberwithin{equation}{section}   
\title[Recovery of Stokes waves from velocity measurements ]{Recovery of Stokes waves from velocity measurements on an axis of symmetry}
\author[B.--V. Matioc]{Bogdan--Vasile Matioc}
\address{Institut f{\"u}r Angewandte Mathematik, Leibniz Universit{\"a}t Hannover, Welfengarten~1, 30167 Hannover, Deutschland.}
\email{matioc@ifam.uni-hannover.de}
\subjclass[2010]{35J60; 76B07; 76B15}
\keywords{water waves; recovery; horizontal velocity;}
\begin{document}

\begin{abstract}
We provide a new method to recover the profile  of  Stokes waves, and more generally of waves with smooth vorticity,  from  measurements  of the horizontal velocity component 
on a vertical axis of symmetry of the wave surface.
Although we consider  periodic waves only,   the extension   to  solitary waves   is straightforward.
\end{abstract}

\maketitle

\section{Introduction}\label{S:1}
A Stokes wave is a two-dimensional periodic wave that travels at constant speed over a flat bed solely under the influence of gravity.
The flow beneath a Stokes wave is irrotational and the  surface profile is symmetric with respect to both crest and trough lines.
Initiated by Stokes \cite{S1880}, the rigorous study of Stokes waves was developed steadily to uncover many fascinating aspects of these remarkable water flows, cf.  e.g. \cite{BT03,  Con11, Con13, AC11,  To96}.  

The problem of reconstructing the free surface of regular Stokes waves from measured flow data, such as the pressure on the fluid bed, is of great importance in the field of fluid dynamics, cf. \cite{BL95, CC13, De11, Do90, KC94, OD12, Tsai05}.
A very simple relation  between the pressure on the bed and the elevation of the water surface is obtained when assuming that the pressure is hydrostatic, see the discussion in \cite{CC13, OD12}.
This method is used for tsunami detection, but, as it  does not account for nonlinear  wave effects, prediction errors   are quite frequent \cite{Do90}.
On the other hand, the transfer function approach   \cite{ES08, KC10} uses a linear formula relating the  wave surface elevation to the pressure on the fluid bed, a large body of research being dedicated to determining a suitable transfer function.
This latter method restricts primarily to   linear irrotational  waves.
Therefore, it  does not  account for nonlinear effects and sometimes  overestimates the   height of large waves \cite{Tsai05}.

Within the exact theory of irrotational water waves, a nonlinear and nonlocal relation  between the wave profile of solitary waves and the pressure on the bed was derived in \cite{De11, OD12}. 
An alternative exact local recovery formula  was obtained in \cite{C12, Hsu14}.
For a recovery formula in the setting of  steady periodic waves we refer to \cite{Cl13, CC13}.
In fact it is theoretically possible to recover the  profile of solitary waves from the pressure on the bed even when allowing for rotational flows, provided that the vorticity distribution is real-analytic \cite{DH14An}.

The goal of the present note is to present a new exact method to recover the profile of periodic traveling waves and the velocity field within the flow from measurements of the horizontal velocity on a vertical axis of symmetry of the wave.
Our analysis is   within the setting of rotational periodic waves with a smooth vorticity distribution, the extension to solitary waves is straightforward. 
In particular, our method can be used to reconstruct the free surface of Stokes waves.
That the profile of rotational periodic gravity waves with a smooth vorticity is determined by the  horizontal velocity on a vertical axis of symmetry -- which is  either a crest or a trough line -- is quite intriguing and it
underlines the strong connection between the flow  and the free  surface   motion. 
This intricate connection is  suggested also by the result of \cite{BM14} where it is shown that the symmetry of the free wave surface  can be characterized  intrinsically in terms of the underlying flow. 

The main tool used in our approach is a very recent observation pertaining to the regularity of rotational flows: any weak solution of the water wave problem 
possesses real-analytic streamlines, the radius of analyticity being independent of the position 
in the fluid, cf. \cite{EM14}.
Finally, we emphasize that the horizontal velocity on crest  or trough lines has been measured in many laboratory  experiments \cite{GCHJ03, HHY12, YS70, SEG92, SCJ01}
and therefore we expect that the practical accuracy of our recovery method can be verified numerically.

\section{Equivalent formulations of the water wave problem}\label{S:2}
We consider  steady periodic water waves propagating at the surface of a two-dimensional fluid body with  wave speed $c>0$.
The Cartesian coordinates $(x,y,z)$ are fixed such that the
$x$-axis is the direction of wave propagation, the $y$-axis points vertically upwards, and the flow is independent of the  $z$-coordinate.
The fluid layer is bounded from below by the flat bed $y=-d$, whereby $d>0$. 
Neglecting viscosity effects and temperature variations, the motion of the fluid is governed by the Euler equations.
As we consider  steady waves only, the space-time dependence of the free surface, of the pressure, and of the velocity field has the form $(x - ct)$. 
Eliminating time by the change of coordinates $(x - ct, y,z) \mapsto (x, y,z),$ in the new coordinates system
in which the origin moves in the direction of propagation of the wave with wave speed $c,$
the wave is stationary and the flow is steady, the equations of motion being encompassed  in the following system
\begin{equation}\label{eq:2}
\left\{
\begin{array}{rllll}
(u-c) u_x+vu_y&=&-P_x&\text{\qquad in $\Omega_\eta$},\\
(u-c) v_x+vv_y&=&-P_y-g&\text{\qquad in $\Omega_\eta$},\\
u_x+v_y&=&0&\text{\qquad in $\Omega_\eta$},\\
P&=&P_0&\text{\qquad on $y=\eta(x)$},\\
v&=&(u-c)\eta'&\text{\qquad on $y=\eta(x)$},\\
v&=&0&\text{\qquad on $ y=-d$}.
\end{array}
\right.
\end{equation}
Hereby, $y=\eta(x)$ is the surface of the wave, $\Omega_\eta:=\{(x,y)\,:\,-d<y<\eta(x)\}$ is the fluid domain, 
$P$ denotes the dynamic pressure, $u$ is the horizontal velocity, $v$ is the vertical velocity,
$g$ is the gravity of Earth, the constant $P_0$ is the uniform air pressure, and the water's density is set to be $\rho=1.$

Classical solutions $(u,v,P,\eta)$ of \eqref{eq:2} belong to the following regularity class
 \begin{equation}\label{eq:r}
(u,v,P,\eta)\in \big(C^{1}_{per}(\overline\Omega_\eta)\big)^3\times C^{2}_{per}(\mathbb{R}),
 \end{equation}
the subscript $per$ being used  to express the $\lambda-$periodicity of $(u,v,P,\eta)$ with respect to the variable $x$, and where the constant $\lambda>0$ is the wavelength. 
A last assumption   we make is that the wave speed $c$ exceeds the horizontal velocity of each particle within the fluid
 \begin{equation}\label{UC}
 \sup_{\Omega_\eta}u<c.
\end{equation}

The  formulation \eqref{eq:2} can be also recast in terms of the streamfunction $\psi$
 which is defined by 
\begin{equation}
 \label{Psi}
\begin{array}{rllll}
\psi(x,y):=-p_0+\displaystyle\int_{-d}^y(u(x,s)-c)\, ds
\end{array}
\qquad\text{for $(x,y)\in\overline\Omega_\eta$},
\end{equation}
the constant  $p_0<0$, which represents the relative mass flux, being given by
\begin{equation}\label{MF}
p_0:=\displaystyle\int_{-d}^{\eta(x)}(u(x,s)-c)\, ds.
\end{equation}
It follows from the third and the last two equations in \eqref{eq:2} that  $p_0$  is independent of $x$.
Let us also observe that  since $\nabla\psi=(-v,u-c)$, the vorticity of the flow is given by the scalar function 
$\omega:=u_y-v_x=\Delta\psi.$

Because of \eqref{UC},  the map $\mathcal{H}:\overline\Omega_\eta\to\overline\Omega$, with $\Omega:=\mathbb{R}\times(p_0,0),$ defined 
by the relation
\begin{equation}\label{HH}
 \mathcal{H}(x,y):=(q,p)(x,y)=(x,-\psi(x,y))\qquad\text{for $(x,y)\in\overline\Omega_\eta$} 
\end{equation}
 is a diffeomorphism, that is $\mathcal{H}\in\mbox{\rm Diff}\,^2(\Omega_\eta,\Omega)$.
 Moreover, it follows from \eqref{eq:2},  cf. e.g. \cite[Section 2]{BM11},  that $\omega\circ \mathcal{H}^{-1}=:\gamma$ depends only on the variable $p$.
The function $\gamma\in C([p_0,0])$ is the so-called   vorticity function.
Note that both $\omega$ and  $\gamma$ are zero constant  functions for Stokes flows.  
Using Bernoulli's law, we find that $(\eta,\psi)\in C^2_{per}(\mathbb{R})\times C^2_{per}(\overline\Omega_\eta)$ solves the following free boundary problem 
\begin{equation}\label{eq:Psi}
\left\{
\begin{array}{rllll}
\Delta\psi&=&\gamma(-\psi)&\text{\qquad in $\Omega_\eta$},\\
|\nabla\psi|^2+2g(y+d)&=&Q&\text{\qquad on $y=\eta(x)$},\\
\psi&=&0&\text{\qquad on $y=\eta(x)$},\\
\psi&=&-p_0&\text{\qquad on $ y=-d$},
\end{array}
\right.
\end{equation}
with the positive constant $Q:=2(E-P_0).$  
In terms of $\psi$ the condition \eqref{UC} appears as  
 \begin{equation}\label{UC1}
 \sup_{\Omega_\eta}\psi_y<0.
\end{equation}

For the existence theory, but also when studying the properties of solutions of  \eqref{eq:Psi}, cf. \cite{CoSt04, EM14}, it is 
useful to express the hydrodynamical problem in terms of the height function  $h:\overline\Omega\to\mathbb{R}$ that is defined by 
$h:=y\circ \mathcal{H}^{-1}+d.$ 
The function $h\in C^2_{per}(\overline\Omega)$ associates to each pair $(q,p)\in\overline\Omega$, the height of the water particle $(x,y):=\mathcal{H}^{-1}(q,p)$ above the bed.
Particularly, we have that $\eta(x)=h(x,0)-d$ for all $x\in\mathbb{R}.$ 
 It follows easily from the definition of $h$ and \eqref{eq:Psi}, cf. e.g.  \cite{CoSt04}, that $h$ solves  a quasilinear elliptic equation with nonlinear boundary conditions 
\begin{equation}\label{eq:hod}
\left\{
\begin{array}{rllll}
(1+h_q^2)h_{pp}-2h_ph_qh_{pq}+h_p^2h_{qq}-\gamma h_p^3&=&0&\text{\qquad in $\Omega$},\\[1ex]
1+h_q^2+(2gh-Q)h_p^2&=&0&\text{\qquad on $p=0$},\\
h&=&0&\text{\qquad on $p=p_0$},
\end{array}
\right.
\end{equation}
while the assumption \eqref{UC} is now equivalent to
 \begin{equation}\label{UC2}
     \inf_\Omega h_p>0.
\end{equation}

In the next Section, we  use the equivalence of the three formulations within the set of classical solutions.
The proof of this result follows easily from the arguments  presented in  \cite[Chapter 3]{Con11} and in \cite{BM11}.
\begin{prop}[Equivalence of the three formulations]\label{P1}
 The following are equivalent:
\begin{enumerate}
 \item[$(i)$] the equations \eqref{eq:2} and \eqref{UC} with $(u,v,P,\eta)$ satisfying \eqref{eq:r};
 \item[$(ii)$] the equations \eqref{eq:Psi} and \eqref{UC1} with $(\eta,\psi)\in C^2_{per}(\mathbb{R})\times C^2_{per}(\overline\Omega_\eta)$ and $\gamma\in C([p_0,0])$;
 \item[$(iii)$] the equations \eqref{eq:hod} and \eqref{UC2} with $h\in C^2_{per}(\overline\Omega) $ and $\gamma\in C([p_0,0]).$
\end{enumerate}
\end{prop}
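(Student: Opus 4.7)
The plan is to verify the three equivalences by using the explicit transformations already introduced above: the streamfunction $\psi$ from \eqref{Psi} and the hodograph map $\mathcal{H}$ from \eqref{HH}. In each direction I would check three things: that the regularity class is preserved, that the equations transform correctly, and that the monotonicity hypotheses \eqref{UC}, \eqref{UC1}, \eqref{UC2} correspond to one another.

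For $(i)\Rightarrow(ii)$, starting from $(u,v,P,\eta)$ as in \eqref{eq:r}, I would define $\psi$ via \eqref{Psi}; the $x$-independence of the constant $p_0$ in \eqref{MF} follows from integrating $u_x+v_y=0$ across the fluid layer and invoking the kinematic and bottom conditions in \eqref{eq:2}. The identity $\nabla\psi=(-v,u-c)$ then delivers $\psi\in C^{2}_{per}(\overline\Omega_\eta)$ and shows that \eqref{UC} translates into \eqref{UC1}. That $\omega=\Delta\psi$ depends only on $-\psi$ is the standard consequence of the first two Euler equations, which give $\nabla\omega\cdot\nabla^{\perp}\psi=0$; hence $\omega=\gamma(-\psi)$ for some $\gamma\in C([p_0,0])$. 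The Dirichlet data $\psi=-p_0$ on $y=-d$ and $\psi=0$ on $y=\eta$ are immediate from \eqref{Psi}, \eqref{MF}, and the kinematic boundary condition. Finally, the Bernoulli surface condition in \eqref{eq:Psi} is obtained by observing that $P+gy+|\nabla\psi|^{2}/2$ is constant along streamlines and evaluating at $y=\eta(x)$. The reverse implication $(ii)\Rightarrow(i)$ proceeds by setting $u:=c+\psi_y$, $v:=-\psi_x$, recovering $P$ from the Bernoulli head, and verifying \eqref{eq:2} by direct differentiation.

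For $(ii)\Leftrightarrow(iii)$ the main tool is the hodograph change of variables. Under \eqref{UC1} the map $\mathcal{H}$ is a $C^{2}$-diffeomorphism, so $h=y\circ\mathcal{H}^{-1}+d$ is well-defined and $C^{2}_{per}(\overline\Omega)$. Differentiating the identity $h(x,-\psi(x,y))=y+d$ once yields
\[
\psi_x=\frac{h_q}{h_p},\qquad \psi_y=-\frac{1}{h_p},
\]
so that \eqref{UC1} and \eqref{UC2} are equivalent, and $|\nabla\psi|^{2}=(1+h_q^{2})/h_p^{2}$. Differentiating a second time produces
\[
\psi_{yy}=\frac{h_{pp}}{h_p^{3}},\quad \psi_{xy}=\frac{h_{pq}}{h_p^{2}}-\frac{h_q h_{pp}}{h_p^{3}},\quad \psi_{xx}=\frac{h_{qq}}{h_p}-\frac{2h_q h_{pq}}{h_p^{2}}+\frac{h_q^{2} h_{pp}}{h_p^{3}},
\]
so that $\Delta\psi=\gamma(-\psi)=\gamma(p)$ becomes, after multiplication by $h_p^{3}$, precisely the quasilinear equation in \eqref{eq:hod}. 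The Bernoulli condition on $y=\eta$ transforms, using $y+d=h(q,0)$, into the nonlinear boundary condition on $p=0$, while $\psi=-p_0$ on the bed becomes $h=0$ on $p=p_0$. The reverse direction runs each of these computations backwards, using the formulas above to reconstruct $\psi$ from $h$.

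The only real obstacle is bookkeeping: the second-order identities above, their inversion for $(iii)\Rightarrow(ii)$, and the careful transformation of the boundary conditions. These computations are entirely mechanical and are carried out in detail in \cite[Chapter~3]{Con11} and in \cite{BM11}, to which I would refer for the routine verifications.
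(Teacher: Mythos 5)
Your proposal is correct and follows essentially the same route as the paper, which proves Proposition \ref{P1} only by deferring to the standard arguments of \cite[Chapter 3]{Con11} and \cite{BM11} — precisely the streamfunction construction and the partial hodograph transform you carry out. Your explicit first- and second-order identities for $\psi$ in terms of $h$, the correspondence of \eqref{UC}, \eqref{UC1}, \eqref{UC2}, and the transformation of the Bernoulli and Dirichlet boundary conditions all check out.
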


\section{The main result and discussion}\label{S:3}
In order to state our main result, we emphasize first that any (vertical) axis of symmetry of the wave must either be a trough or a crest line.
This is a simple consequence of the  real-analyticity of the wave profile, a property which holds a priori  for solutions in the class \eqref{eq:r}, cf. \cite[Corollary 1.2]{EM14}. 
The  main result of this paper is the following theorem.

\begin{thm}\label{MT1} Consider a regular
\footnote{By regular wave we mean a  solution $(u,v,P,\eta)$ of the hydrodynamical problem \eqref{eq:2} that satisfies \eqref{eq:r} and \eqref{UC}. 
Notice that this class of solutions ensures that the three formulations are equivalent, cf. Proposition \ref{P1}. 
The condition \eqref{UC} is satisfied by regular Stokes waves automatically,
only the Stokes wave of greatest height having the property that $u=c$ at the crest where the free water surface forms an angle of 120 degrees, cf. \cite{To96}.} 
 two-dimensional symmetric and periodic gravity wave with  vorticity $\omega\in C^\infty(\overline\Omega_\eta)$ that travels over the flat bed $y=-d$ with wave speed $c>0.$
Furthermore, assume that   the horizontal component $u$  of the  velocity field is known on the   line of symmetry
$$\{(x_0,y)\,:\, -d\leq y\leq \eta(x_0)\}$$ 
of the wave.
Then, we can recover the wave profile, the velocity field and the pressure distribution within the flow. 
\end{thm}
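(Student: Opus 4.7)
My plan is to reformulate the problem in the semi-hodograph coordinates $(q,p) = (x, -\psi)$ and to work with the height function $h \in C^2_{per}(\overline\Omega)$, which solves the quasilinear boundary value problem \eqref{eq:hod}. After a horizontal translation I may assume $x_0 = 0$. The symmetry of the wave about $x = 0$ translates into $h(q, p) = h(-q, p)$, so in particular $h_q(0, p) = 0$ for every $p \in [p_0, 0]$. From the measurement I compute the trace of the streamfunction along $x = 0$, namely $\psi(0, y) = -p_0 + \int_{-d}^{y} (u(0, s) - c)\, ds$; assumption \eqref{UC} guarantees that $y \mapsto -\psi(0, y)$ is a strictly increasing bijection from $[-d, \eta(0)]$ onto $[p_0, 0]$, and its inverse $p \mapsto y(p)$ gives $h(0, p) = y(p) + d$, while $h_p(0, p) = 1/(c - u(0, y(p)))$. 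Both are smooth functions of $p$.

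The next step is to determine all coefficients $a_n(p) := \partial_q^n h(0, p)$ recursively, as smooth functions of $p \in [p_0, 0]$. The symmetry identity implies $a_{2k+1}(p) \equiv 0$. For the even derivatives I proceed by induction. Evaluating \eqref{eq:hod} at $q = 0$, and using $h_q(0, p) = h_{pq}(0, p) = 0$, yields
\begin{equation*}
h_{pp}(0, p) + h_p^2(0, p)\, a_2(p) - \gamma(p)\, h_p^3(0, p) = 0,
\end{equation*}
which determines $a_2(p)$ since $h_p(0, p) > 0$ by \eqref{UC2}. For $n \ge 1$, I differentiate \eqref{eq:hod} exactly $n$ times in $q$ and evaluate at $q = 0$. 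A close inspection of the nonlinearity shows that the only term capable of producing $\partial_q^{n+2} h$ is $h_p^2 h_{qq}$; the remaining terms $(1 + h_q^2) h_{pp}$, $-2 h_p h_q h_{pq}$ and $-\gamma h_p^3$ contribute at most $\partial_q^{n+1} h$ after Leibniz's rule. The resulting relation therefore reads $h_p^2(0, p)\, a_{n+2}(p) + \Phi_n(p) = 0$, where $\Phi_n$ depends only on $a_0, \dots, a_{n+1}$, their $p$-derivatives and $\gamma$, all of which are already known. This fixes the whole sequence $(a_n)_{n \ge 0}$.

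According to \cite[Corollary 1.2]{EM14}, for each $p \in [p_0, 0]$ the map $q \mapsto h(q, p)$ is real-analytic, with a radius of convergence $R > 0$ that can be chosen independently of $p$. Consequently,
\begin{equation*}
h(q, p) = \sum_{n=0}^{\infty} \frac{a_n(p)}{n!}\, q^n \qquad \text{for } (q, p) \in (-R, R) \times [p_0, 0],
\end{equation*}
so $h$ is recovered on this strip. Iterating the Taylor expansion around new base points $q_1$ (at which the required $q$-derivatives of $h$ are read off from the previous step) extends $h$ to $(q_1 - R, q_1 + R) \times [p_0, 0]$; since the radius $R$ is uniform and $h$ is $\lambda$-periodic in $q$, finitely many such steps recover $h$ on all of $\overline\Omega$. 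In particular, $\eta(x) = h(x, 0) - d$. The streamfunction $\psi$ is then obtained via $\mathcal{H}^{-1}$, the velocity field from $\nabla\psi = (-v, u - c)$, and the pressure $P$ from Bernoulli's law together with $Q = 2(E - P_0)$.

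The main obstacle is the inductive construction in the second paragraph: the nonlinearity of \eqref{eq:hod} requires careful bookkeeping of the highest-order $q$-derivatives under Leibniz differentiation in order to confirm that $h_p^2(0, p)$ is genuinely the only coefficient of $a_{n+2}(p)$ and that the remainder $\Phi_n(p)$ is expressible through quantities already computed. Beyond this, the argument essentially reduces to the deep real-analyticity result of \cite{EM14} with uniform radius of convergence, together with the equivalence of the three formulations of the water wave problem provided by Proposition \ref{P1}.
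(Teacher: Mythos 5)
Your proposal is correct and follows essentially the same route as the paper: pass to the height function $h$ in semi-hodograph coordinates, determine $a_0=h(0,\cdot)$ from the measured trace of $u-c$ on the symmetry axis, obtain all higher $q$-derivatives at $q=0$ recursively from the quasilinear equation \eqref{eq:hod} (your term-by-term Leibniz bookkeeping is exactly what the paper's coefficient identification \eqref{F1}--\eqref{F2} carries out explicitly), and then reconstruct $h$ on all of $\overline\Omega$ using the uniform radius of analyticity of the streamlines. The only point you gloss over is that the symmetry $h(q,p)=h(-q,p)$ is deduced in the paper from the symmetry of the surface profile alone via a maximum-principle argument as in \cite[Lemma 3.2]{BM14}, rather than being immediate.
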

Before proving Theorem \ref{MT1} it is worthwhile to add some remarks.

\begin{rem} 
\begin{itemize}
 \item[$(a)$]  Theorem \ref{MT1} is true in particular for Stokes waves for which the vorticity is zero. 
We emphasize that a constant nonzero vorticity, for which  the assumption $\omega\in C^\infty(\overline\Omega_\eta)$  is also trivially satisfied, is relevant in  many  physical situations, cf. the discussion in  \cite{SP88}.
 \item[$(b)$]
We remark that to the best of our knowledge, there is no rigorous proof for the existence of two-dimensional periodic steady gravity waves which are not symmetric.
For criteria which ensure the symmetry of such waves with respect to both trough and crest lines we refer to \cite{CoEhWa07, CoEs04_b, BM14, OS01}.
\item[$(c)$] In the proof of Theorem \ref{MT1}, we do  not use of the assumption of periodicity.
Particularly, our result shows that also the surface of solitary waves can be recovered when knowing the velocity field on a axis of symmetry.
\item[$(d)$] The values of $u$ on the vertical symmetry line $[x=x_0]$ and the wave speed are not both needed, but just the values of   $u-c$ on this axis of symmetry. 
\item[$(e)$] The proof of Theorem \ref{MT1} uses to a large extent the fact that the streamlines of the flow are all real-analytic, the radius  of analyticity being uniform in the fluid.
An  open question is whether the radius of analyticity can be chosen to be  $\infty.$
\end{itemize}

\end{rem}

\begin{proof}[Proof of Theorem \ref{MT1}]
Let $\eta,$ $(u,v)$, and $P$ be the unknown wave profile, velocity field, and pressure distribution, respectively.
Because of the equivalence result in Proposition \ref{P1}, we only need to determine the height function $h$. 
Observing that the problem \eqref{eq:2}  is invariant with respect to horizontal translations we may set $x_0=0.$  
Hence, $\eta$ is symmetric with respect to the vertical line $[x=0].$

Our assumptions together with \eqref{MF} enable us to determine the relative mass flux constant $p_0$, cf. \eqref{MF}.
Knowing the strip $\Omega$ where the height function $h\in C^2_{per}(\overline\Omega)$  is defined, we next show that $h$ is well-determined  by the restriction of $(u-c)$  on the axis of symmetry $[x=0]$.
To this end, we recall that $\Delta\psi=\omega$  in $\Omega_\eta$ and that $\psi$ is constant on $\partial\Omega_\eta$.
From   our assumption $\omega\in C^\infty(\overline\Omega_\eta)$ and  the real-analyticity of the wave profile we deduce  that  $\psi\in C^\infty(\overline\Omega_\eta)$,  cf. \cite[Theorem 6.19]{GT01}.
Therewith, we get $\gamma=\omega\circ \mathcal{H}^{-1}\in C^\infty([p_0,0])$ and  also $h\in C^{\infty}_{per}(\overline\Omega)$.

Using the weak  elliptic maximum principle in the context of \eqref{eq:hod} as in \cite[Lemma 3.2]{BM14} shows that the symmetry of the free water surface with respect to $[x=0]$, that is $\eta(x)=\eta(-x)$ for all $x\in\mathbb{R},$ 
also implies  the symmetry of $h$, that is $h(q,p)=h(-q,p)$ for all $(q,p)\in\overline\Omega.$ 
Even more holds: there exists a constant $L>0$ such that  
\begin{align}\label{L:1}
 \|\partial_q^mh\|_{C^2(\overline\Omega)}\leq L^{m-1}(m-2)!\qquad\text{for all $m\geq 2,$}
\end{align}
cf. \cite[Theorem 3.3]{CLW13} (see also \cite{EM14} for a more general result).
The estimate \eqref{L:1} ensures the real-analyticity of the streamlines as   $[q\mapsto h(q,p)],$ $p\in[p_0,0],$ is  real-analytic.
Indeed,   \eqref{L:1} implies    for each  $(q_0,p)\in\overline\Omega$  that
 \begin{align}\label{BF}
 \Big |h(q,p)-\sum_{k=0}^n \frac{\partial_q^k h(q_0,p)}{k!}(q-q_0)^k \Big |
&  \leq \frac{\|\partial_q^{n+1}h\|_0}{(n+1)!}|q-q_0|^{n+1} 
\leq  
 \frac{(L|q-q_0|)^{n+1}}{L}\to_{n\to\infty} 0
\end{align}
if  $L|q-q_0|<1.$
In particular, the radius of convergence  of the Taylor series is independent of the point $(q_0,p).$
This latter property enables us to extend the function $h$ to the whole strip $\overline\Omega$, provided that we know it on the subset $[-(2L)^{-1}\leq q\leq (2L)^{-1}]\subset\overline\Omega,$
where   the Taylor series of $[q\mapsto h(q,p)]$ converges for each $p\in[p_0,0]$.

With this, our task reduces to determining $h$ in the rectangle ${[-(2L)^{-1}\leq q\leq (2L)^{-1}]}.$
We note that the symmetry of $h$  implies in  particular that $\partial_q^{2n+1}h(0,p)=0$ for all $n\in\mathbb{N}$ and $p\in[p_0,0]$.
This property and \eqref{BF} yield that
 \begin{align}\label{BF1}
 h(q,p)=\sum_{n=0}^\infty a_{2n}(p)q^{2n} \qquad\text{for $|q|<(2L)^{-1}$ and $p\in[p_0,0],$}
\end{align}
whereby $a_{2n}:=((2n)!)^{-1}\partial_q^{2n}h(0,\cdot)\in C^\infty([p_0,0])$ for all $n\in\mathbb{N}.$
 Similarly to \eqref{BF}, one can deduce from \eqref{L:1}  that
 \begin{equation}\label{BF2}
 \begin{aligned}
 &h_p(q,p)=\sum_{n=0}^\infty a_{2n}'(p)q^{2n},\qquad  h_q(q,p)=\sum_{n=1}^\infty 2na_{2n}(p)q^{2n-1}, \qquad h_{pp}(q,p)=\sum_{n=0}^\infty a_{2n}''(p)q^{2n},\\
 &h_{qp}(q,p)=\sum_{n=1}^\infty 2na_{2n}'(p)q^{2n-1},\qquad h_{qq}(q,p)=\sum_{n=1}^\infty 2n(2n-1)a_{2n}(p)q^{2n-2}
\end{aligned}
\end{equation}
for $|q-q_0|\leq (2L)^{-1}$ and  all $p\in[p_0,0].$

Since   the values of $(u-c)$ on the  symmetry line
$[x=0]$ are known, we can determine the coefficient function $a_0$ as follows.
From the definition of $h$ we obtain that $h(0,-\psi(0,y))=y+d$ for all $y\in[-d,\eta(0)],$
whereby, in virtue of \eqref{Psi} and \eqref{MF},
\begin{equation*} 
\psi(0,y)=-p_0+\displaystyle\int_{-d}^{y}(u(0,s)-c)\, ds,\qquad\text{ $y\in[-d,\eta(0)].$}
\end{equation*}
Because of \eqref{UC1}, the map $-\psi(0,\cdot):[-d,\eta(0)]\to[p_0,0]$ is bijective and its inverse is thus also known.
With the notation from Section \ref{S:2} it is the function  
$y\circ \mathcal{H}^{-1}(0,\cdot):[p_0,0]\to [-d,\eta(0)]$.
Hence, $a_0$ is determined by the formula $a_0=y\circ \mathcal{H}^{-1}(0,\cdot)+d:[p_0,0]\to [0,d+\eta(0)].$

To finish the proof, we show that $a_0$ determines all the other coefficient functions $a_{2n},$ $ n\in\mathbb{N}, $ $n\geq 1.$
Indeed, plugging the Taylor expansions \eqref{BF} and \eqref{BF1} into the first equation of \eqref{eq:hod} yields, after identifying for a fixed $p\in[p_0,0] $ the coefficients of each power of $q$, that 
\begin{align}
&A_0+C_0+D_0=0 \qquad\text{and}\qquad  A_{2n}+B_{2n}+C_{2n}+D_{2n}=0\qquad\text{for $n\geq1.$}\label{R2}
\end{align}
Hereby, we used the following relations
\begin{align*}
 &(1+h_q^2)h_{pp}=\sum_{n=0}^\infty A_{2n}q^{2n},\quad -2h_qh_ph_{pq}=\sum_{n=1}^\infty B_{2n},\quad q^{2n}h_p^2h_{qq}=\sum_{n=0}^\infty C_{2n}q^{2n},\quad-\gamma h_p^3=\sum_{n=0}^\infty D_{2n}q^{2n}
 \end{align*}
 with
\begin{align*}   
&A_{2n}=a_{2n}''+\sum_{k=1}^{n}a_{2(n-k)}''\Big(\sum_{l=1}^k4l(k-l+1)a_{2l}a_{2(k-l+1)}\Big),\\
 & B_{2n}=-2\sum_{k=0}^{n-1}a_{2(n-k-1)}'\Big(\sum_{l=1}^{k+1}4l(k-l+2)a_{2l}a_{2(k-l+2)}'\Big),\\
 &C_{2n}=\sum_{k=0}^{n}(2k+1)(2k+2)a_{2k+2}\Big(\sum_{l=0}^{n-k}a_{2l}'a_{2(n-k-l)}'\Big),\\
 &D_{2n}=-\gamma\sum_{k=0}^{n}a_{2k}'\Big(\sum_{l=0}^{n-k}a_{2l}'a_{2(n-k-l)}'\Big).
\end{align*}
The first equation of \eqref{R2} is equivalent to
\begin{equation}\label{F1}
 a_2=\frac{\gamma a_0'}{2}-\frac{a_0''}{2(a_0')^2},
\end{equation}
whereby, in view of $u<c$ in $\overline\Omega_\eta$, we have $a_0'>0.$
Moreover, from the second equation of  \eqref{R2}  we find that 
\begin{align}
 a_{2n+2}=&\frac{1}{(2n+1)(2n+2)(a_0')^2}\left[-a_{2n}''-\sum_{k=1}^{n}a_{2(n-k)}''\Big(\sum_{l=1}^k4l(k-l+1)a_{2l}a_{2(k-l+1)}\Big)\right.\nonumber\\
 &+2\sum_{k=0}^{n-1}a_{2(n-k-1)}'\Big(\sum_{l=1}^{k+1}4l(k-l+2)a_{2l}a_{2(k-l+2)}'\Big)\nonumber\\
 &\left.-\sum_{k=0}^{n-1}(2k+1)(2k+2)a_{2k+2}\Big(\sum_{l=0}^{n-k}a_{2l}'a_{2(n-k-l)}'\Big)+\gamma\sum_{k=0}^{n}a_{2k}'\Big(\sum_{l=0}^{n-k}a_{2l}'a_{2(n-k-l)}'\Big)\right]\label{F2}
\end{align}
for all integers $n\geq 1$.
The relations \eqref{F1} and \eqref{F2} show that $a_{2n+2}$ is uniquely determined  by $a_0,\ldots, a_0^{(2n)}$,  $\gamma,\ldots,\gamma^{(2n)}$. 
This  completes the proof.
\end{proof}

\vspace{0.5cm}
\noindent{\bf Acknowledgements} 
The author  thanks the anonymous referees for the comments and suggestions which have improved the quality of the article.


\begin{thebibliography}{10}

\bibitem{BL95}
A.~Baquerizo and M.~A. Losada.
\newblock {Transfer function between wave height and wave pressure for
  progressive waves}.
\newblock {\em Coastal Engineering}, 24:351--353, 1995.

\bibitem{BT03}
B.~Buffoni and J.~Toland.
\newblock {\em {Analytic Theory of Global Bifurcation}}.
\newblock {Princeton Series in Applied Mathematics}. Princeton University
  Press, Princeton, NJ, 2003.
\newblock An introduction.

\bibitem{CLW13}
H.~Chen, W.-X. Li, and L.-J. Wang.
\newblock {Regularity of traveling free surface water waves with vorticity}.
\newblock {\em J. Nonlinear Sci.}, 23:1111--1142, 2013.

\bibitem{Cl13}
D.~Clamond.
\newblock {New exact relations for easy recovery of steady wave profiles from
  bottom pressure measurements}.
\newblock {\em J. Fluid Mech.}, 726:547--558, 2013.

\bibitem{CC13}
D.~Clamond and A.~Constantin.
\newblock {Recovery of steady periodic wave profiles from pressure measurements
  at the bed}.
\newblock {\em J. Fluid Mech.}, 714:463--475, 2013.

\bibitem{Con11}
A.~Constantin.
\newblock {\em {Nonlinear Water Waves with Applications to Wave-Current
  Interactions and Tsunamis}}, volume~81 of {\em {CBMS-NSF Conference Series in
  Applied Mathematics}}.
\newblock SIAM, Philadelphia, 2011.

\bibitem{C12}
A.~Constantin.
\newblock {On the recovery of solitary wave profiles from pressure
  measurements}.
\newblock {\em J. Fluid Mech.}, 699:376--384, 2012.

\bibitem{Con13}
A.~Constantin.
\newblock {Mean velocities in a {S}tokes wave}.
\newblock {\em Arch. Ration. Mech. Anal.}, 207(3):907--917, 2013.

\bibitem{CoEhWa07}
A.~Constantin, M.~Ehrnstr\"{o}m, and E.~Wahl\'{e}n.
\newblock {Symmetry of steady periodic gravity water waves with vorticity}.
\newblock {\em Duke Math. J.}, 140(3):591--603, 2007.

\bibitem{CoEs04_b}
A.~Constantin and J.~Escher.
\newblock {Symmetry of steady periodic surface water waves with vorticity}.
\newblock {\em J. Fluid Mech.}, 498(1):171--181, 2004.

\bibitem{AC11}
A.~Constantin and J.~Escher.
\newblock {Analyticity of periodic traveling free surface water waves with
  vorticity}.
\newblock {\em Ann. of Math.}, 173:559--568, 2011.

\bibitem{CoSt04}
A.~Constantin and W.~Strauss.
\newblock {Exact steady periodic water waves with vorticity}.
\newblock {\em Comm. Pure Appl. Math.}, 57(4):481--527, 2004.

\bibitem{De11}
B.~Deconinck, D.~Henderson, K.~L. Oliveras, and V.~Vasan.
\newblock {Recovering the water-wave surface from pressure measurements}.
\newblock {In Proc. 10th Intl Conf. on Waves, Vancouver Juli 25-29, 2011},
  pages 699--702. PIMS, 2011.

\bibitem{Do90}
M.~Donelan and W.~Hui.
\newblock {Mechanics of Ocean Surface Waves}.
\newblock In G.~Geernaert and W.~Plant, editors, {\em {Surface Waves and
  Fluxes}}, volume~7 of {\em {Environmental Fluid Mechanics}}, pages 209--246.
  Springer Netherlands.

\bibitem{EM14}
J.~Escher and B.-V. Matioc.
\newblock {On the analyticity of periodic gravity water waves with integrable
  vorticity function}.
\newblock {\em Differential Integral Equations}, 27(3-4):217--232, 2014.

\bibitem{ES08}
J.~Escher and T.~Schlurmann.
\newblock {On the recovery of the free surface from the pressure within
  periodic traveling water waves}.
\newblock {\em J. Nonlinear Math. Phys.}, 15(suppl. 2):50--57, 2008.

\bibitem{GT01}
D.~Gilbarg and N.~S. Trudinger.
\newblock {\em {Elliptic Partial Differential Equations of Second Order}}.
\newblock Springer Verlag, 2001.

\bibitem{GCHJ03}
J.~Grue, D.~Clamond, M.~Huseby, and A.~Jensen.
\newblock {Kinematics of extreme waves in deep water}.
\newblock {\em Applied Ocean Research}, 25:355--366, 2003.

\bibitem{DH14An}
D.~Henry.
\newblock {On the pressure transfer function for solitary water waves with
  vorticity}.
\newblock {\em Math. Ann.}, 357(1):23--30, 2013.

\bibitem{Hsu14}
H.-C. Hsu.
\newblock {Recovering surface profiles of solitary waves on a uniform stream
  from pressure measurements}.
\newblock {\em Discrete Contin. Dyn. Syst. Ser. A}, 34(8):3035--3043, 2014.

\bibitem{HHY12}
H.-C. Hsu, Y.-Y. Chen, C.-Y. Lin, and C.-Y. Cheng.
\newblock {Experimental study of the velocity field in solitary water waves}.
\newblock {\em Journal of Nonlinear Mathematical Physics}, 19(supp01):1240003,
  2012.

\bibitem{YS70}
Y.~Iwagaki and T.~Sakai.
\newblock {Horizontal water particle velocity of finite amplitude waves}.
\newblock {Proc. Twelfth Conf. Coastal Eng.}, pages 309--325, 1970.

\bibitem{KC10}
P.~K. Kundu and I.~M. Cohen.
\newblock {\em {Fluid Mechanics}}.
\newblock Academic Press, 2010.

\bibitem{KC94}
Y.-Y. Kuo and Y.-F. Chiu.
\newblock {Transfer function between wave height and wave pressure for
  progressive waves}.
\newblock {\em Coastal Engineering}, 23:81--93, 1994.

\bibitem{BM11}
B.-V. Matioc.
\newblock {Analyticity of the streamlines for periodic traveling water waves
  with bounded vorticity}.
\newblock {\em Int. Math. Res. Not.}, 17:3858--3871, 2011.

\bibitem{BM14}
B.-V. Matioc.
\newblock {A characterization of the symmetric steady water waves in terms of
  the underlying flow}.
\newblock {\em Discrete Contin. Dyn. Syst. Ser. A}, 34(8):3125--3133, 2014.

\bibitem{OS01}
H.~Okamoto and M.~Sh\={o}ji.
\newblock {\em {The Mathematical Theory of Permanent Progressive Water-Waves}}.
\newblock {Adv. Ser. Nonlinear Dynam. 20}. World Scientific Pub Co Inc, 2001.

\bibitem{OD12}
K.~L. Oliveras, V.~Vasan, B.~Deconinck, and D.~Henderson.
\newblock {Recovering the water-wave profile from pressure measurements}.
\newblock {\em SIAM J. Appl. Math.}, 72(3):897--918, 2012.

\bibitem{SEG92}
D.~J. Skyner, W.~J. Easson, and C.~A. Greated.
\newblock {The internal kinematics of steep waves on sheared currents}.
\newblock Technical report, 1992.
\newblock Dept. of Energy report, OTH 92366.

\bibitem{S1880}
G.~G. Stokes.
\newblock {\em {Mathematical and Physical Papers}}.
\newblock Cambridge University Press, Cambridge, 1880.

\bibitem{SCJ01}
C.~Swan, I.~P. Cummings, and R.~L. James.
\newblock {An experimental study of two-dimensional surface water waves
  propagating on depth-varying currents. Part 1. Regular waves}.
\newblock {\em J. Fluid Mech.}, 428:273--304, 2001.

\bibitem{SP88}
A.~F. {Teles da Silva} and D.~H. Peregrine.
\newblock {Steep, steady surface waves on water of finite depth with constant
  vorticity}.
\newblock {\em J. Fluid Mech.}, 195:281--302, 1988.

\bibitem{To96}
J.~F. Toland.
\newblock {{S}tokes waves}.
\newblock {\em Topol. Methods Nonlinear Anal.}, 7(1):1--48, 1996.

\bibitem{Tsai05}
C.-H. Tsai, M.-C. Huang, F.-J. Young, Y.-C. Yin, and H.-W. Li.
\newblock {On the recovery of surface wave by pressure transfer function}.
\newblock {\em Ocean Engineering}, 32:1247--1259, 2005.

\end{thebibliography}
\end{document}